
\documentclass[draft]{article}

\textheight 216mm       
\textwidth 152mm        

\oddsidemargin 3.6mm    
\topmargin -10mm        

\usepackage{amssymb,amsfonts,amstext}
\usepackage{latexsym}
\usepackage{amsmath}
\pagestyle{myheadings}

\overfullrule0pt

\newcommand{\NR}{{\mathbb R}}

\newcommand{\NC}{{\mathbb C}}

\newcommand{\NZ}{{\mathbb Z}}

\newcommand{\gr}[2]{\mathop{{\bf #1}({#2})}\nolimits}

\newcommand{\IS}[2]{\sum\limits_{#1}^{#2}}
\newcommand{\ts}{\textstyle}
\newcommand{\DS}[2]{\ts\bigoplus\limits_{#1}^{#2}}

\newcommand{\be}{\begin{equation}}
\newcommand{\ee}{\end{equation}}
\newcommand{\oz}{\overline{z}}
\renewcommand{\Re}{\mathop{\rm Re}\nolimits}
\newcommand{\Ref}[1]{{\rm (\ref{#1})}}
\long\def\comment#1{}

\newcommand{\CH}{{\cal H}}

\newcommand{\CP}{{\cal P}}

\newcommand{\CZ}{{\cal Z}}

\mathchardef\za="710B  
\mathchardef\zb="710C  
\mathchardef\zg="710D  
\mathchardef\zd="710E  
\mathchardef\ze="710F 
\mathchardef\zz="7110  
\mathchardef\zh="7111  
\mathchardef\zy="7112 
\mathchardef\zi="7113  
\mathchardef\zk="7114  
\mathchardef\zl="7115  
\mathchardef\zm="7116  
\mathchardef\zn="7117  
\mathchardef\zx="7118  
\mathchardef\zp="7119  
\mathchardef\zr="711A  
\mathchardef\zs="711B  
\mathchardef\zt="711C  
\mathchardef\zu="711D  
\mathchardef\zf="711E 
\mathchardef\zq="711F  
\mathchardef\zc="7120  
\mathchardef\zw="7121  
\mathchardef\zve="7122  
\mathchardef\zvy="7123  
\mathchardef\zvo="7124  
\mathchardef\zvr="7125 
\mathchardef\zvs="7126 
\mathchardef\zvf="7127  
\mathchardef\zG="7000  
\mathchardef\zD="7001  
\mathchardef\zY="7002  
\mathchardef\zL="7003  
\mathchardef\zX="7004  
\mathchardef\zP="7005  
\mathchardef\zS="7006  
\mathchardef\zU="7007  
\mathchardef\zF="7008  
\mathchardef\zC="7009  
\mathchardef\zW="700A  

\title{On spherical expansions of smooth $\gr{SU}{n}$-zonal functions on the unit sphere in $\NC^n$}

\author{{\bf Agata Bezubik\thanks{e-mail: agatab@math.uwb.edu.pl}}\\
\footnotesize{Institute of Mathematics,}\\[-6pt]
\footnotesize{University of Bia\l{}ystok,}\\[-6pt]
\footnotesize{Akademicka 2, 15-267 Bia\l{}ystok, POLAND} 
\and {\bf Aleksander\ Strasburger\thanks{Corresponding author; e-mail: aleksander\_strasburger@sggw.pl}}\\
\footnotesize{Department of Applied Mathematics, }\\[-6pt]
\footnotesize{Warsaw University of Life Sciences, } \\[-6pt]
\footnotesize{Nowoursynowska 166, 02-787 Warszawa, POLAND}
}

\pagestyle{myheadings}

\newtheorem{theo}{Proposition} 
 
\newtheorem{lemma}[theo]{Lemma}
\newtheorem{Theo}{Theorem}
\newtheorem{coro}{Corollary}[theo]
\newtheorem{remark}{Remark}

\newcommand{\qed}{$\Box$} 
\newenvironment{proof}{\begin{trivlist}\item[\hskip\labelsep{\bf Proof.}]}
{\hfill\qed\end{trivlist}}

\date{}
\begin{document}

\maketitle

\thispagestyle{empty}

\begin{abstract} 
\noindent 

We give a self-contained presentation of a novel approach to a construction of spherical harmonic expansions on the unit sphere in $\NC^n$. We derive a new formula for coefficients of the expansion of a smooth zonal function defined on the unit sphere and apply it in some special cases. The expansion for the Poisson--Szeg\"o kernel for the unit ball in $\NC^n$ obtained by our method coincides with the result obtained originally  by G. Folland, and on the other hand disproves results recently presented in a paper of V.A. Menegatto et al..     

\vspace{.4cm}
\noindent{\bf  AMS Math. Subj. Class. (2000):} 58G35, 35F05, 33A75, 33A45

\vspace{.2cm} 
\noindent{\bf  Key words and phrases:} {\em Laplace operator; spherical harmonics; zonal harmonic polynomials; Jacobi polynomials;   Poisson--Szeg\"o kernel.}

\end{abstract}

\section{Introduction}
This paper presents a part of results from the doctoral dissertation of Agata Bezubik \cite{AB} concerned with spherical expansions of zonal functions on the unit sphere in the complex space $\NC^n$. The aim was to develop further an approach initiated in the papers \cite{BDS1,BDS2} of constructing spherical expansions of smooth zonal functions in terms of their differential characteristic (e.g. the Taylor coefficients) rather than the integral ones (i.e. integral means against appropriate reproducing kernels). 

The complex case treated here, in contrast to the real case discussed in the earlier paper \cite{BDS2}, goes beyond the classical theory in the sense that it requires the use of an orthogonal system in two variables --- more specifically, of an orthogonal system on the unit disc constructed in terms of the so called disc polynomials obtained from the classical Jacobi polynomials, cf. e.g. \cite{Koo,SD}. However this aspect is interesting for us only so far as being the tool for parametrizing the zonal functions on the unit sphere in $\NC^n$. 

Finally let us note that almost at the same time as the doctoral thesis of A. B. was submitted (June 2010), an article \cite{MOP} of V.A.~Menegatto, A.P.~Peron, and C.P.~Oliveira has been published in this Journal (\textit{Collectanea Mathematica\/}). Acknowledging an inspiration of our earlier paper \cite{BDS2}, the authors formulate an expansion theorem (Theorem 2.3 of their paper) with an almost identical content as the Theorem \ref{exp_f_C} --- this ,,almost" referring to certain inaccuracies regarding the various coefficients throughout the paper. We shall pinpoint these problematic expressions in \cite{MOP} in due course below. 

\section{Preliminaries}
We shall be concerned with problems in analysis on the unit sphere  $S=\{z\in \NC^n \ | \  |z|^2=1\}$ in the complex $n$-space $\NC^n$. The norm $|\,\cdot\,|$ is the usual one, defined as $|z|^2=\sum_{j=1}^{n}|z_j|^2$, and derived from the hermitian inner product $(z|w)= \sum_{j=1}^{n}z_j\bar{w_j}$. Given  $z=(z_1,\,\ldots,\,z_n)\in\NC^n$,  we set $
z_j=x_j+iy_j$, with real $x_j,\,y_j$ and $i=\sqrt{-1}$, for $j=1,\,\ldots,\,n$. This gives us a standard identification of $\NC^n$ with the real Cartesian space $\NR^{2n}$ by $(z_1,\,\ldots,\,z_n)\leftrightarrow (x_1,\,\ldots,\,x_n,\,y_1,\,\ldots,\,y_n)$.

The special unitary group $\gr{SU}{n}$ acts transitively on $S$ with the isotropy group of a point $\zh\in S$ isomorphic to the group $\gr{SU}{n-1}$. In fact, taking $\zh=e_n$, the unit vector corresponding to the $n$-th coordinate, we easily see that the isotropy group consists of matrices of the form $\left(\begin{smallmatrix} A&0\\0 &1\end{smallmatrix}\right)$, with the $(n-1)$-square block $A$ in the upper left corner belonging to $\gr{SU}{n-1}$. 

Thus, along with the usual representation of $S$ as the homogeneous space $S=\gr{SO}{2n}/\gr{SO}{2n-1}$ of the orthogonal group $\gr{SO}{2n}$, we can identify the sphere $S$ with the homogeneous space $\gr{SU}{n}/ \gr{SU}{n-1}$. The choice of reference point $\zh\in S$ is not relevant, since isotropy groups are conjugate and it will be  of some advantage not to fix this point by any specific choice. The sphere $S$ is equiped with the normalized Euclidean surface measure $d\zs_{2n-1}=d\zs$ (i.e. $\int_S d\zs=1$), which is $\gr{O}{2n}$-invariant, as well as $\gr{SU}{n}$-invariant.   

By means of the above identification we can express objects attached to the Euclidean space $\NR^{2n}$ in terms of the complex coordinates. For example 
the Euclidean Laplacian $\zD $ on $\NR^{2n}$ is written in the complex form  
\begin{equation}\label{eqn:complex_laplacian}
\zD =
\; 4 \sum\limits_{j=1}^{n}{\frac{\partial^2}{\partial z_j\partial \bar{z}_j}}.
\end{equation}
using the standard notation 
$\frac{\partial}{\partial z_j}=\frac12\left(\frac{\partial}{\partial x_j}-i\frac{\partial}{\partial y_j}\right),\ 
\frac{\partial}{\partial\bar{ z_j}}=\frac12\left(\frac{\partial}{\partial x_j}+i\frac{\partial}{\partial y_j}\right)
$ 
for $j=1,\,\ldots,\,n$.

Likewise, polynomials on $\NR^{2n}$ can be written as polynomials of the complex coordinates $z_j$ and their conjugates $\bar{z_j}$, what will be indicated by writting $P(x,y)=P(z,\,\bar{z})$ for a typical element of the polynomial algebra $\CP(\NR^{2n})$ also denoted as $\CP(\NC^{n})$. For example, the square of the norm  is $
 |z|^2\;= \;\sum_{j=1}^{n}  x_j^2+\sum_{j=1}^{n} y_j^2\; =\; 
 \sum_{j=1}^{n}z_j\bar{z_j}.
$

A departure point for analysis on the unit sphere $S^{d-1}$ is the fundamental decomposition 
\begin{equation}\label{eqn:space-decomposition} 
L^2(S^{d-1},\,d\,\sigma)= {\ts \bigoplus\limits_{l=0}^{\infty}} H^{l}.
\end{equation}
where $ H^{l}$ denotes the space of restrictions to the sphere of harmonic (i.e. annihilated by the Laplacian) and homogeneous  of degree $l$ polynomials on $\NR^d$. A key property of those spaces is their irreducibility under the action of the orthogonal group $\gr{SO}{d}$.  

For the case of the unit sphere in $\NC^n$, the main role is assigned to the group $\gr{SU}{n}$, what requires a refinement of the decomposition \Ref{eqn:space-decomposition} based on the notion of bihomogeneity of polynomials. It is introduced as follows. Given a pair of non-negative integers $p,\,q$ one says that a polynomial $P(z,\,\bar{z})$ on $\NC^n$ is bihomogeneous of degree $(p,\,q)$ if 
\begin{equation}\label{eqn:bihomogen}
 P(\zl z,\bar{\zl}\bar{ z})=\zl^p\; \bar{\zl}^q\; P(z,\bar{z}), \qquad \zl\in \NC, \quad z\in \NC^n.
\end{equation}
The space of bihomogeneous of degree $(p,q)$ polynomials  on $\NC^n$ is denoted by $ \CP^{(p,q)}=\CP^{(p,q)}(\NC^n)$. We notice that  $\CP^{(p,0)}$ contains only holomorphic polynomials, while $\CP^{(0,q)}$ ---  antiholomorphic ones. 
Clearly, $(p,q)$ bihomogeneous polynomials are homogeneous of degree $p+q$ in the real sense and there is a direct sum decomposition 
\[
\CP^l(\NC^{n}) = \DS{p+q=l}{}\CP^{(p,q)}(\NC^n)
\]
of the space of homogeneous of degree $l$ polynomials on $\NC^n$. 
We shall write $\CH^{(p,q)}=\CH^{(p,q)}(\NC^n)=\ker \zD \cap \CP^{(p,q)}(\NC^n)$ for the space of harmonic and bihomogeneous polynomials of bidegree $(p,q)$ (with $\NC^n$ as their domain) --- termed "solid harmonics", and $H^{(p,q)}$ for the space of their restrictions to $S$ --- "surface harmonics". These spaces are irreducible under the action of $\gr{SU}{n}$, their dimensions are given by (c.f. \cite{Koo,SD})
\[
\dim \CH^{(p,q)} =\dim H^{(p,q)} = \frac{(n+p+q-1)(n-2+p)!(n-2+q)!}{p!\ q!\ (n-1)!\ (n-2)!},
\]
and there is an orthogonal decomposition (w.r. to the $L^2(S)$ inner product)
\begin{gather}\label{harm_decomp_by_bideg} 
H^l = \DS{p+q=l}{}H^{(p,q)},
\intertext{and consequently a $\gr{SU}{n}$-irreducible, orthogonal decomposition}
\label{harm_decomp_by_bideg2} 
 L^2(S,\,d\,\sigma)= {\ts \bigoplus\limits_{p,q=0}^{\infty}} H^{(p,q)}.
\end{gather}
Analogous to the real case, there is a decomposition of bihomogeneous polynomials into harmonic components. In fact, $\zD: \CP^{(p,q)}(\NC^n)\to \CP^{(p-1,q-1)}(\NC^n) $ is surjective with the kernel $\CH^{(p,q)}(\NC^n)$ and consequently there is a $\gr{SU}{n}$-invariant decomposition 
\be\label{decomp_by_bideg}
\CP^{(p,q)}(\NC^n) =  {\ts \bigoplus\limits_{k=0}^{\min(p,q)}} r^{2k} \CH^{(p-k,q-k)}(\NC^n).
\ee
The explicit formulae for this decomposition can be deduced from their counterparts in the real case, which were stated in our paper \cite[Theorem 1]{BDS2}.  We just formulate the end result below. It should be noted, however, that the decomposition has a long history, as it was studied independently by several authors, see e.g. \cite{IS,VSz,Koo}.

\begin{Theo}[The canonical decomposition of bihomogeneous polynomials]
\label{can-decomp_C} 
Given $p,q\in \NZ_+$ we set $m=\min(p,\,q)$ and for $0\le k\le m$ we define 
\begin{align}
\zb^{(p,q)}_{\,k}(j) & = 
(-1)^j\frac {(n-1+p+q-2k)(n-2+p+q-2k-j)!}{4^{k+j}\; k!\; j!\; (n-1+p+q-k)!}. \nonumber 
\intertext{and for a polynomial $P\in\CP^{(p,q)}$ we set} 
\label{eqn:can-decomp3_C} 
h^{(p,\,q)}_k (P)& = 
 \sum^{m-k}_{j=0}  \zb^{(p,q)}_{\,k}(j) 
r^{2j} \Delta^{k+j}(P).  
\intertext{%
Then $h^{(p,\,q)}_k(P)\in \CH^{(p-k,q-k)}$ and $P$ can be decomposed into harmonic components}
\label{rozkl_P}
P & = \IS{k=0}{m}r^{2k}h^{(p,\,q)}_k(P).    
\end{align}
The resulting direct sum decomposition 
$\CP^{(p,q)} =  
\bigoplus
_{k=0}^{m} r^{2k} \CH^{(p-k,q-k})$
is orthogonal with respect to the inner product induced by restriction of the inner product in $L^2(S)$ and the maps $ \CP^{(p,q)}\ni P \rightarrow r^{2k}h^{(p,\,q)}_k(P)$ are orthogonal projections corresponding to this decomposition and commuting with the action of $\gr{SU}{n}$. 
\end{Theo}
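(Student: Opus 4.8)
The plan is to treat $\Delta$ and multiplication by $r^2$ as the lowering and raising operators of a three-term ladder on $\CP^{(p,q)}$, and to show that $h^{(p,q)}_{k}$ acts exactly as the projection onto the $k$-th rung; every assertion then follows from the decomposition \Ref{decomp_by_bideg}, whose existence and uniqueness of components are already granted. Precisely, \Ref{decomp_by_bideg} supplies a unique expression $P=\sum_{l=0}^{m}r^{2l}H_l$ with $H_l\in\CH^{(p-l,q-l)}$, so by linearity of $h^{(p,q)}_{k}$ it suffices to establish the single relation $h^{(p,q)}_{k}(r^{2l}H_l)=\delta_{kl}H_l$. Applying this term by term gives $h^{(p,q)}_{k}(P)=H_k$, which simultaneously yields $h^{(p,q)}_{k}(P)\in\CH^{(p-k,q-k)}$ and the reconstruction formula \Ref{rozkl_P}.

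The computational engine is the commutation relation $[\Delta,r^2]=4E+2d$, with $E$ the Euler operator and $d=2n$ the real dimension; this is the $2n$-dimensional instance of the real ladder underlying \cite{BDS2}, here read off at fixed bidegree. From it one obtains, for $g$ harmonic and homogeneous of real degree $s$, the clean identity $\Delta(r^{2m}g)=2m(2m+2s+d-2)\,r^{2m-2}g$. Iterating this on $g=H_l$ (harmonic, of real degree $p+q-2l$) gives $\Delta^{i}(r^{2l}H_l)=A(l,i)\,r^{2l-2i}H_l$ with the explicit factor
\[
A(l,i)=\prod_{t=1}^{i}4(l-t+1)(n+p+q-l-t),
\]
which vanishes for $i>l$. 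Substituting into \Ref{eqn:can-decomp3_C} collapses $h^{(p,q)}_{k}(r^{2l}H_l)$ to $\bigl(\sum_{j}\zb^{(p,q)}_{k}(j)A(l,k+j)\bigr)r^{2l-2k}H_l$, so the whole statement reduces to two scalar identities: the normalization $\zb^{(p,q)}_{k}(0)A(k,k)=1$, which is a direct factorial cancellation, and the vanishing $\sum_{j=0}^{l-k}\zb^{(p,q)}_{k}(j)A(l,k+j)=0$ for every $l>k$ (the case $l<k$ being automatic, since $A(l,k+j)=0$ there).

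The vanishing identity carries the real content and is the step I expect to be the main obstacle. Writing $M=l-k\ge 1$ and cancelling the $j$-independent factors, the sum takes the shape $\sum_{j=0}^{M}\frac{(-1)^j}{j!\,(M-j)!}\,Q(j)$, where $Q(j)=\frac{(n-2+p+q-2k-j)!}{(n-1+p+q-k-l-j)!}$. The key observation is that the two factorial arguments differ by the constant $M-1$, so $Q(j)$ is a product of $M-1$ consecutive integers, i.e. a polynomial in $j$ of degree $M-1$, in particular of degree strictly less than $M$. Since $\sum_{j=0}^{M}(-1)^j\binom{M}{j}Q(j)$ is, up to sign, the $M$-th forward difference of $Q$ evaluated at $0$, and the $M$-th difference of a polynomial of degree $<M$ vanishes, the sum is zero; this dissolves the identity without any special-function machinery. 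Finally, orthogonality and $\gr{SU}{n}$-equivariance are immediate: $\Delta$ and $r^2$ commute with the $\gr{SU}{n}$-action, so each $h^{(p,q)}_{k}$ is equivariant; on $S$ one has $r=1$, whence the summands restrict to the spaces $H^{(p-k,q-k)}$, which are mutually inequivalent irreducible $\gr{SU}{n}$-modules and therefore pairwise orthogonal in $L^2(S)$. This forces the equivariant idempotents $P\mapsto r^{2k}h^{(p,q)}_{k}(P)$ to be precisely the orthogonal projections of the decomposition.
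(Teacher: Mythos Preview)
Your argument is correct. The key computation---iterating $\Delta(r^{2m}g)=4m(m+s+n-1)r^{2m-2}g$ for $g$ harmonic of real degree $s$, and then killing the resulting alternating sum by recognizing it as an $M$-th finite difference of a polynomial of degree $M-1$---checks out line by line, including the normalization $\zb^{(p,q)}_{k}(0)A(k,k)=1$ and the automatic vanishing for $l<k$.

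As for comparison: the paper does not give an independent proof of this theorem. It simply remarks that the explicit formulae ``can be deduced from their counterparts in the real case, which were stated in our paper \cite[Theorem~1]{BDS2}'', and then records the end result. Your write-up is therefore not an alternative to the paper's proof but rather a self-contained execution of exactly the strategy the paper defers to: you specialize the real ladder identity $[\Delta,r^{2}]=4E+2d$ to $d=2n$, work at fixed bidegree, and verify directly that $h^{(p,q)}_{k}$ is the projector onto the $k$-th summand of \Ref{decomp_by_bideg}. The only mild difference in emphasis is that you take the existence of the decomposition \Ref{decomp_by_bideg} as input and prove the projector formula, whereas the paper packages both together; since \Ref{decomp_by_bideg} is stated (with uniqueness implicit) just before the theorem, this is entirely legitimate. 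Your orthogonality/equivariance paragraph is also fine, though you could equally well just quote the orthogonality of the $H^{(p-k,q-k)}$ already asserted around \Ref{harm_decomp_by_bideg}--\Ref{harm_decomp_by_bideg2}, without invoking inequivalence of irreducibles.
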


\section{Decomposition of zonal functions}
\subsection{Preliminaries}
Recall that the isotropy group $\gr{SU}{n}_{\zh}$ of any given point $\zh\in S =S^{2n-1}\subset \NC^n$ is conjugate to $\gr{SU}{n-1}$. A function $f$ defined on $S$ which is invariant under the action of the isotropy group $\gr{SU}{n}_{\zh}$ will be called a zonal function with the pole at $\zh$ (more properly perhaps it should be called ``zonal with respect to $\gr{SU}{n}$''). By a slight abuse of notation we shall call a bihomogeneous polynomial on $\NC^n$ zonal, if its restriction to the unit sphere is zonal. Since the space $\gr{SU}{n}_{\zh}\backslash S$ of orbits of the action of $\gr{SU}{n}_{\zh}$ on the sphere $S$ is naturally identified with the closed disc $D=\{w\in \NC : |w|\leq 1\}$ in the complex plane by factoring the map 
\be\label{eqn:quotient}
S\ni \zx \mapsto (\zx\mid \zh)\in D
\ee
with respect to the natural projection $S \to \gr{SU}{n}_{\zh}\backslash S$, it follows that any zonal function $f$ on $S$ can be uniquely represented in the form 
\be\label{profil_C}
f(\zx)=\zvf((\zx\mid\zh)), \qquad \zx\in S. 
\ee
by a function $\zvf$ defined on the disc $D$, called the profile of $f$.

Further, the (normalized) measure $d\,\zs_{2n-1} $ on the sphere  $S$ is factorized with respect to the map \Ref{eqn:quotient} as follows  
\begin{equation}\label{2.6}
\int_{S^{2n-1}}\!\!f(\zx)d\,\zs_{2n-1}(\zx)\!=\! \frac{n-1}{\zp}\!\int_D(1-|w|^2)^{n-2}d\,\zl(w)\!\int_{S^{2n-3}}\!\!
f(w\zh+\sqrt{1-|w|^2}\zr)d\,\zs_{2n-3}(\zr),
\end{equation}           
where $d\,\zl(w)$ is the Lebesgue measure on the disc $D$ and $S^{2n-3}$ is the unit sphere in the space perpendicular to $\zh$. 

To describe zonal functions we need to introduce a convenient notation and  recall some classical notions. We write $\za = n-2$, since in this form the dimension of $\NC^n$ enters most of the formulas in the sequel. The hypergeometric function is defined by the sum of the series, convergent for all $|t|<1$,  
\be\label{hypergeom} {}_2F_1\left(\begin{matrix}
a, \; b \\
c  \end{matrix};\ t\right) = \IS{j=0}{\infty}\frac{(a)_j(b)_j}{(c)_j}\frac{t^j}{j!}, 
\ee
where $a,\,b,\,c$ are parameters (for us always real),  
and  $(r)_j$ denotes the Pochhammer symbol defined recursively by setting $(r)_0=1$ and  
$(r)_j=r(r+1)\cdots(r+j-1)$, for $j>0$,
for any real (or complex) $r$ and non-negative integer $j$. 
In the following we shall be concerned with the case when the nominator parametres $a,\,b$ are non-positive integers and $c< \min(a,\,b)$, in which case the hypergeometric series  \Ref{hypergeom} terminates and represents a polynomial of degree $\min(-a,\,-b)$. 

The next result provides at the same time explicit expressions for zonal harmonics and the detailed form of the harmonic decomposition of certain bihomogeneous polynomials.  

\begin{theo}Let $p,\,q\in\NZ_+$ be given and $\za=n-2$ as defined above, fix $\zh\in S$ and consider the zonal polynomial $P(z,\oz)=(z \mid\zh)^p \overline{(z\mid \zh)}^q\in\CP^{(p,\,q)}$. The harmonic decomposition \Ref{rozkl_P} of $P$ has the form  
\begin{align}\label{rozw_w_f_hip}
(z\mid \zh)^p\overline{(z\mid \zh)}^q & =  \IS{k=0}{m} |z|^{2k}
 \frac{ p!\;q!\;(\za+1+p+q-2k)!}{k!\;(p-k)!\;(q-k)!\;(\za+1+p+q-k)!} 
\comment{|z|^{p+q} \IS{k=0}{m}(\zx\mid \zh)^{p-k}\overline{(\zx\mid \zh)}^{q-k}\frac{ p!\;q!\;(\za+1+p+q-2k)!}{k!\;(p-k)!\;(q-k)!\;(\za+1+p+q-k)!}}
\nonumber  \\ 
& \times (z\mid \zh)^{p-k}\overline{(z\mid \zh)}^{q-k} {}_2F_1\left(\begin{matrix}-p+k,\; -q+k \\
-\za-p-q+2k \end{matrix}; \ \frac{1}{|(\zx\mid \zh)|^{2}}\right).
\end{align}
\end{theo}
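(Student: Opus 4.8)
The plan is to feed the special polynomial $P=(z\mid\zh)^p\overline{(z\mid\zh)}^q$ directly into the explicit projection formula \Ref{eqn:can-decomp3_C} of Theorem~\ref{can-decomp_C} and then recognize the resulting finite sum as a terminating hypergeometric series. Everything hinges on an exact evaluation of the iterated Laplacian of $P$, which is simple here thanks to its product structure. Writing $u=(z\mid\zh)=\sum_j z_j\bar{\zh}_j$, so that $u$ is holomorphic and $\bar u=\overline{(z\mid\zh)}$ antiholomorphic, we have $\partial_{z_j}u=\bar{\zh}_j$, $\partial_{\bar z_j}\bar u=\zh_j$, while the mixed derivatives of $u$ and of $\bar u$ vanish. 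Hence from \Ref{eqn:complex_laplacian},
\[
\zD(u^p\bar u^q)=4\sum_{j=1}^n\partial_{z_j}\partial_{\bar z_j}(u^p\bar u^q)=4pq\Big(\sum_{j=1}^n|\zh_j|^2\Big)u^{p-1}\bar u^{q-1}=4pq\,u^{p-1}\bar u^{q-1},
\]
where the crucial simplification $\sum_j|\zh_j|^2=|\zh|^2=1$ uses $\zh\in S$. Iterating, and noting that $\zD^sP=0$ once $s>m=\min(p,q)$, one obtains
\[
\zD^s P=4^s\,\frac{p!\,q!}{(p-s)!\,(q-s)!}\,u^{p-s}\bar u^{q-s},\qquad 0\le s\le m.
\]

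Next I would substitute $\zD^{k+j}P$ into $h^{(p,q)}_k(P)=\sum_{j=0}^{m-k}\zb^{(p,q)}_{\,k}(j)\,r^{2j}\zD^{k+j}(P)$. The factors $4^{k+j}$ in $\zb^{(p,q)}_{\,k}(j)$ cancel against those from $\zD^{k+j}P$, and after pulling out $u^{p-k}\bar u^{q-k}$ together with the $j$-independent constant one is left with a sum whose $j$-th term carries $(-1)^j$, the factorial ratios $(p-k)!/(p-k-j)!$, $(q-k)!/(q-k-j)!$, $(\za+p+q-2k-j)!/(\za+p+q-2k)!$, and a power of $r^2/|u|^2$ arising from $r^{2j}u^{-j}\bar u^{-j}$. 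Converting these ratios to Pochhammer symbols via $(p-k)!/(p-k-j)!=(-1)^j(-p+k)_j$, $(q-k)!/(q-k-j)!=(-1)^j(-q+k)_j$, and $(\za+p+q-2k-j)!/(\za+p+q-2k)!=(-1)^j/(-\za-p-q+2k)_j$, the four resulting sign factors collapse to $+1$, and the sum becomes exactly the ${}_2F_1$ with numerator parameters $-p+k,\,-q+k$, denominator $-\za-p-q+2k$, and argument $r^2/|u|^2$. The surviving prefactor simplifies, using $(\za+1+p+q-2k)(\za+p+q-2k)!=(\za+1+p+q-2k)!$, to precisely
\[
\frac{p!\,q!\,(\za+1+p+q-2k)!}{k!\,(p-k)!\,(q-k)!\,(\za+1+p+q-k)!},
\]
the coefficient displayed in \Ref{rozw_w_f_hip}. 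Multiplying by $r^{2k}=|z|^{2k}$ and summing over $k$ through \Ref{rozkl_P} yields the asserted identity; restricting to $S$, where $r=1$ and $z=\zx$, turns the hypergeometric argument into $1/|(\zx\mid\zh)|^2$ as written.

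I expect the only real friction to be the bookkeeping: tracking the four $(-1)^j$ factors so that they cancel, and verifying that the constant produced by the $j=0$ term combines with $\zb^{(p,q)}_{\,k}(0)$ to reproduce the stated coefficient exactly. Two minor points deserve explicit mention. First, although the hypergeometric argument is written as a negative power of $|(\zx\mid\zh)|^2$, each individual term $r^{2j}(z\mid\zh)^{p-k-j}\overline{(z\mid\zh)}^{q-k-j}$ with $0\le j\le m-k$ is a genuine polynomial, so \Ref{rozw_w_f_hip} is an identity of polynomials rather than a merely formal one. Second, the nonpositive numerator parameters $-p+k,\,-q+k$ force the series to terminate at $j=m-k$, matching both the range of the sum defining $h^{(p,q)}_k$ and the description of a terminating ${}_2F_1$ adopted after \Ref{hypergeom}.
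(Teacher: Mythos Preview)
Your proposal is correct and follows exactly the approach the paper itself indicates: a direct computation using formula \Ref{eqn:can-decomp3_C} together with the evaluation $\zD^s\bigl((z\mid\zh)^p\overline{(z\mid\zh)}^q\bigr)=4^s\,\dfrac{p!\,q!}{(p-s)!\,(q-s)!}\,(z\mid\zh)^{p-s}\overline{(z\mid\zh)}^{q-s}$ and subsequent conversion to Pochhammer symbols. One small remark: since $\zx=z/|z|$ one has $1/|(\zx\mid\zh)|^2=r^2/|(z\mid\zh)|^2$, so the hypergeometric argument in \Ref{rozw_w_f_hip} already equals your $r^2/|u|^2$ for all $z$, and no restriction to $S$ is needed to match the stated formula.
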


The proof is a straightforward computation based on the formula \Ref{eqn:can-decomp3_C} combined with some chasing of coefficients and can be omitted.

To note is the representation theoretic interpretation of the decomposition \Ref{rozw_w_f_hip}. 
Since the projection maps $P\mapsto h^{(p,\,q)}_k(P)$ commute with the action of $\gr{SU}{n}$, the harmonic polynomials   
\[
(z\mid \zh)^{p-k}\overline{(z\mid \zh)}^{q-k} {}_2F_1\left(\begin{matrix}-p+k,\; -q+k \\
-\za-p-q+2k \end{matrix}; \ \frac{1}{|(\zx\mid \zh)|^{2}}\right)
\]
in the above decomposition are zonal elements of $\gr{SU}{n}$-irreducible spaces $\CH^{(p-k,\,q-k)}$. Since the latter are known to contain one dimensional subspace of zonal elements, we may state the following.
\begin{coro}Given arbitrary nonnegative integers $k,\,j$, any zonal polynomial in $\CH^{(k,\,j)}$ with pole at $\zh$ is proportional to the following  
\be\label{solid_zonal}
\CZ^{(k,\,j)}_{\zh}(z,\,\oz) = (z\mid \zh)^{k}\overline{(z\mid \zh)}^{j} {}_2F_1\left(\begin{matrix}-k,\; -j \\ -\za-k-j \end{matrix}; \ \frac{1}{|(\zx\mid \zh)|^{2}}\right).
\ee
\end{coro}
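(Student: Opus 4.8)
The plan is to deduce the statement from the representation–theoretic remark preceding it, namely that the polynomials displayed just above the corollary are zonal elements of the irreducible modules $\CH^{(p-k,q-k)}$, combined with a multiplicity–one property. Concretely, I would reduce the corollary to three assertions: (i) the subspace of zonal elements of $\CH^{(k,j)}$ has dimension at most one; (ii) $\CZ^{(k,j)}_{\zh}$ is a nonzero element of $\CH^{(k,j)}$; and (iii) $\CZ^{(k,j)}_{\zh}$ is zonal with pole at $\zh$. Granting these, every zonal polynomial of $\CH^{(k,j)}$ lies in the at most one–dimensional space of zonal elements, which by (ii)--(iii) is spanned by the nonzero vector $\CZ^{(k,j)}_{\zh}$; hence it is a scalar multiple of $\CZ^{(k,j)}_{\zh}$, which is exactly the assertion.

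For (ii) and (iii) I would specialize formula \Ref{rozw_w_f_hip} to $p=k$ and $q=j$. The summand with running index equal to $0$ carries the coefficient $\tfrac{k!\,j!\,(\za+1+k+j)!}{k!\,j!\,(\za+1+k+j)!}=1$ and the factor $|z|^{0}=1$, so it is exactly the polynomial $\CZ^{(k,j)}_{\zh}$ of \Ref{solid_zonal}. Thus $\CZ^{(k,j)}_{\zh}=h^{(k,j)}_0\big((z\mid\zh)^k\overline{(z\mid\zh)}^j\big)$, and Theorem \ref{can-decomp_C} places it in $\CH^{(k,j)}$. It is nonzero because $P=(z\mid\zh)^k\overline{(z\mid\zh)}^j$ is not divisible by $r^2=|z|^2$, so its leading harmonic component $h^{(k,j)}_0(P)$ cannot vanish. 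For zonality I would note that $P$ is invariant under the isotropy group $\gr{SU}{n}_{\zh}$, since $(gz\mid\zh)=(z\mid g^{-1}\zh)=(z\mid\zh)$ for every $g\in\gr{SU}{n}_{\zh}$; as the projection $P\mapsto h^{(k,j)}_0(P)$ commutes with the $\gr{SU}{n}$–action by Theorem \ref{can-decomp_C}, it commutes in particular with $\gr{SU}{n}_{\zh}$, so $\CZ^{(k,j)}_{\zh}=h^{(k,j)}_0(P)$ inherits this invariance and is zonal.

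The conceptual heart, and the step I expect to be the main obstacle, is assertion (i). I would derive it from the fact that the decomposition \Ref{harm_decomp_by_bideg2} of $L^2(S)$ into the pairwise inequivalent irreducibles $H^{(p,q)}$ is multiplicity–free, i.e. that $(\gr{SU}{n},\gr{SU}{n-1})$ is a Gelfand pair. Identifying $S$ with $\gr{SU}{n}/\gr{SU}{n-1}$, a zonal function is precisely one fixed by the isotropy group $\gr{SU}{n}_{\zh}\cong\gr{SU}{n-1}$, and by the standard correspondence for compact homogeneous spaces the dimension of the space of $\gr{SU}{n-1}$–fixed vectors in an occurring irreducible equals its multiplicity in $L^2(S)$. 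Multiplicity–freeness then forces this dimension to equal $1$ for each $H^{(k,j)}$, which yields (i) (in fact with equality). With (i)--(iii) established, the claimed proportionality follows immediately.
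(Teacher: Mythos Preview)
Your proposal is correct and follows essentially the same line as the paper: the paper too observes that the projections $h^{(p,q)}_k$ commute with the $\gr{SU}{n}$-action, so that the harmonic components of the zonal polynomial $(z\mid\zh)^p\overline{(z\mid\zh)}^q$ are zonal, and then appeals to the one-dimensionality of the zonal subspace in each irreducible $\CH^{(k,j)}$. You supply more detail than the paper does---in particular a Gelfand-pair justification of the multiplicity-one fact and an explicit argument for nonvanishing---whereas the paper simply records the one-dimensionality as ``known'' and reads the corollary off directly.
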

It is customary to normalize the zonal polynomials by requiring that they assume the value $1$ at the pole. Since the Chu-Vandermonde formula (cf. \cite[Cor.~2.2.3, p.~67]{AA}) gives  
\[
{}_2F_1\left(\begin{matrix}-k,\; -j \\ -\za-k-j \end{matrix}; \ 1\right)= \frac{(\za+1)_k(\za+1)_j}{(\za+1)_{k+j}},
\]
we set 
\be \label{reprod_kernel} 
Z^{(k,\,j)}_\zh(\zx)=\frac{(\za+1)_{k+j}}{(\za+1)_k(\za+1)_j}(\zx\mid \zh)^{k}\overline{(\zx\mid \zh)}^{j} {}_2F_1\left(\begin{matrix}-k,\; -j \\ -\za-k-j \end{matrix}; \ \frac{1}{|(\zx\mid \zh)|^{2}}\right),
\ee
obtaining so called \emph{reproducing kernels} for spaces $H^{(k,\,j)}$. They are characterized by the identity 
\be\label{eqn:reproducing_identity}
\dim H^{(k,\,j)} \int_Sf(\zx)Z^{(k,\,j)}_\zh(\zx) d\,\zs(\zx)= f(\zh),\qquad f\in H^{(k,\,j)}.
\ee
An equivalent expression for zonal kernels uses Jacobi polynomials $P^{(\zm,\,\zn)}_{m}(t)$, cf. \cite[Section 2.4.3]{DXu},  
\be\label{zonalny-C}
Z_{\zh}^{(k,\,j)}(\zx)=(\zx \mid \zh)^{k-m}\overline{(\zx \mid \zh)}^{j-m} \; \frac{P^{(\za,|k-j|)}_{m}(2|(\zx \mid \zh)|^2-1)}{P^{(\za,|k-j|)}_{m}(1)}, \qquad \zx\in S,
\ee
where $m=\min(k,\,j)$. Taking account of the normalization used, we can rewrite the decomposition \Ref{rozw_w_f_hip} for surface harmonics as follows  
\[
(\zx\mid\zh)^p\overline{(\zx\mid\zh)}^q  =\IS{k=0}{m}\zg^{p,\,q}_k Z_{\zh}^{(p-k,\,q-k)}(\zx),
\]
with
\be \label{wspolczynnik-gamma}
\zg^{p,\,q}_k =
\frac{p!q!(\za+1+p+q-2k)(\za+p-k)!(\za+q-k)!}{k!\za!(\za+1+p+q-k)!(p-k)!(q-k)!}.
\ee

Profile functions of the reproducing kernels $Z_{\zh}^{(p,q)}(\zx)$ are precisely the disc  polynomials in the normalization given in the book of Dunkl and Xu \cite[Section 2.4.3]{DXu}. 
For $p,\,q\in \NZ_+$ and $\za$ as above we set 
\be\label{Dunkl_wiel}
W^\za_{p,\,q}(w)=\frac{(\za+1)_{p+q}}{(\za+1)_p(\za+1)_q}w^p\bar w^{q}{}_2F_1\left(\begin{matrix}-p,\; -q\\ 
-\za-p-q \end{matrix} ;\ \frac{1}{|w|^{2}}\right), \qquad w\in D,\  
\ee
so that $ Z_{\zh}^{(p,\,q)}(\zx)= W^\za_{p,\,q}((\zh\mid\zx))$.

In virtue of surjectivity of the map \Ref{eqn:quotient} the decomposition formula 
\Ref{rozw_w_f_hip} implies the following decomposition of basic monomials on the disc.

\begin{coro}For any pair of nonnegative integers $p,\,q$, the following is valid
\be\label{w^pw^q}
w^p\bar w^q =\IS{k=0}{m}\zg^{p,\,q}_k\;W^\za_{p-k,\,q-k}(w), \qquad 
w\in D, 
\ee
where $\zg^{p,\,q}_k $ are given by \Ref{wspolczynnik-gamma}.
\end{coro}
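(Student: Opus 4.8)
The plan is to obtain this disc identity by transporting the surface-harmonic decomposition already in hand from the sphere $S$ to the disc $D$ through the quotient map \Ref{eqn:quotient}, as the sentence preceding the statement anticipates. The three ingredients I would assemble are: the normalized form of \Ref{rozw_w_f_hip}, namely
\[
(\zx\mid\zh)^p\overline{(\zx\mid\zh)}^q = \IS{k=0}{m}\zg^{p,\,q}_k\, Z_{\zh}^{(p-k,\,q-k)}(\zx), \qquad \zx\in S;
\]
the identification of each reproducing kernel with its profile, $Z_{\zh}^{(p,\,q)}(\zx)=W^\za_{p,\,q}((\zx\mid\zh))$, which is just the definition \Ref{Dunkl_wiel} read with $w=(\zx\mid\zh)$; and the surjectivity of the map $S\ni\zx\mapsto(\zx\mid\zh)\in D$.

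Substituting the profile identity into the right-hand side of the displayed decomposition, I note that for every $\zx\in S$ both sides depend on $\zx$ only through the scalar $w=(\zx\mid\zh)$: the left-hand side is $w^p\bar w^q$ and the right-hand side is $\IS{k=0}{m}\zg^{p,\,q}_k\,W^\za_{p-k,\,q-k}(w)$. This establishes the asserted identity for every $w$ in the image of the quotient map. To conclude that it holds for all $w\in D$ I would invoke surjectivity: given $w\in D$, the point $\zx=w\zh+\sqrt{1-|w|^2}\,\zr$, with $\zr\perp\zh$ and $|\zr|=1$ --- exactly the parametrization underlying \Ref{2.6} --- satisfies $|\zx|^2=|w|^2+(1-|w|^2)=1$ and $(\zx\mid\zh)=w$, so every value $w\in D$ is realized. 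Since both sides are polynomials in $w$ and $\bar w$ agreeing on a set with nonempty interior, the identity holds throughout $D$.

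The only point demanding care, and the step I expect to be the genuine (if modest) obstacle, is the conjugation bookkeeping in the profile identity. One must keep the disc variable fixed consistently as $w=(\zx\mid\zh)$ on both sides, so that the exponents of $w$ and $\bar w$ in $W^\za_{p-k,\,q-k}$ track those of $(\zx\mid\zh)$ and $\overline{(\zx\mid\zh)}$; using instead $(\zh\mid\zx)=\overline{(\zx\mid\zh)}$ would interchange the roles of $p$ and $q$ and produce the $p\leftrightarrow q$-swapped summands $W^\za_{q-k,\,p-k}$. Once this is pinned down the monomial $w^p\bar w^q$ lines up with the left-hand side and the argument is purely formal; no hypergeometric manipulation is required, since all the combinatorics of the coefficients $\zg^{p,\,q}_k$ has already been absorbed into \Ref{rozw_w_f_hip}.
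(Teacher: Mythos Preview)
Your proposal is correct and follows exactly the route the paper indicates: the paper's entire argument is the single sentence preceding the corollary, namely that surjectivity of the quotient map \Ref{eqn:quotient} transports the surface decomposition \Ref{rozw_w_f_hip} (in its normalized form) to the disc via the profile identity $Z_{\zh}^{(p,\,q)}(\zx)=W^\za_{p,\,q}((\zx\mid\zh))$, and you have simply spelled out those details. Your caution about the conjugation bookkeeping is well placed --- the paper itself writes $(\zh\mid\zx)$ rather than $(\zx\mid\zh)$ just before the corollary, which would indeed swap $p$ and $q$ if taken literally.
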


Substitution of $w=1$ in \Ref{w^pw^q} together with the normalization $W^\za_{(p,\,q)}(w)=1$ gives the following ``decomposition of unity''
\be\label{jedynka}
1=\displaystyle{\IS{k=0}{m}\frac{p!q!(n-1+p+q-2k)(n-2+p-k)!(n-2+q-k)!}{k!(n-2)!(n-1+p+q-k)!(p-k)!(q-k)!}}.
\ee
which is crucial for the proof of our main result, the Expansion Theorem \ref{exp_f_C} in the next Section.

\begin{remark}
The expansion \Ref{w^pw^q} coincides, with the necessary adjustment of notation, with the one given in the paper of W\"unsche \cite[formula (3.10), p. 142]{Wunsche}, where it is approached from a different angle. However, in the paper of Menegatto and al., \cite[Lemma 2.1, p.~153]{MOP} W\"unsche's  formula is incorrectly quoted, with an extra factorial appearing in the expression for their coefficients  $c^k_{n,p,q}$ (replacing our $\zg^{p,\,q}_k$). As a consequence, their decomposition of unity (2.5) on page 154 does not agree with our formula \Ref{jedynka}, and is not correct, as the following simple example shows. Taking $m=q=2, n=3$ in their formula (2.5), one gets  
$$\IS{k=0}{\min(2,3)}\frac{2!3!(2-k+2-2)!(3-k+2-2)!(2+3-2k+2-1)!}{(2-2)!k!(2-k)!(3-k)!(2+3-k+2-1)!}=14.9\neq 1.$$ 

\end{remark}

\subsection{A general expansion theorem}

We now come to the main result of this paper, which generalizes Theorem 2 of our  earlier paper \cite{BDS2} to the present context of spheres in complex $n$-space.  

Let $f$ be a zonal function on the unit sphere $S\subset\NC^n$. Recalling this means that $f$ is invariant with respect to the isotropy group $\gr{SU}{n}_{\zh} $ of a point $\zh\in S$, it is easy to infer from the group invariance of the decomposition \Ref{harm_decomp_by_bideg2}, that its spherical harmonic expansion is of the form
\[
f(\zx)=\IS{p,q=0}{\infty}d_{p,\,q}\;\dim H^{(p,\,q)}\; Z_{\zh}^{(p,\,q)}(\zx), 
\] 
where $d_{p,\,q}=d_{p,\,q}(f)$ are scalar coefficients. Now, regarding the issue of computing these coefficients, the general theory of orthogonal expansions together with the identity \Ref{eqn:reproducing_identity} implies that 
\be\label{wsp_d_p,q}
d_{p,\,q}(f)=\int_S f(\zx)Z_{\zh}^{(p,\,q)}(\zx)\;d\sigma (\zx).
\ee 
By virtue of \Ref{2.6} this reduces to the integral of the profile $\zvf$ of $f$,
\[
d_{p,\,q}(f)= \frac{\za+1}{\pi}\int_D \zvf(w)W^\za_{p,\,q}(w)(1-|w|^2)^\za\;d\zl(w).
\] 
 
Similarly to the case of the real sphere, discussed in our earlier paper \cite{BDS2}, for sufficiently regular zonal functions the integral formula for the expansion coefficients can be replaced by a differential formula involving Taylor coefficients of the profile function.

To simplify notation we shall write $\partial, \bar\partial$ instead of $\frac{\partial}{\partial w},\; \frac{\partial}{\partial \bar w}.$

\begin{Theo}\label{exp_f_C}
Let $f$ be a zonal function with pole at $\zh \in S$, $\zvf$, $f(\zx)=\zvf((\zx\,|\,\zh))$. Assume the  profile function $\zvf$ is real analytic on the interior of the disc $D$ and its Taylor series around $0$ 
\[
\IS{j,\,k=0}{\infty}\frac{\partial^j \bar\partial^k\;\zvf(0)}{j!k!}  w^j\bar w^k 
\]
is absolutely convergent on the unit circle.
Then the coefficients $d_{p,\,q}$ of the spherical harmonic expansion of $f$, 
\be \label{complex_expansion}
f(\zx)=\IS{p,q=0}{\infty}d_{p,\,q}\;\dim H^{(p,\,q)}\; Z_{\zh}^{(p,\,q)}(\zx), 
\ee
are given by the formulae 
\be\label{wspol_rozniczk}
d_{p,\,q}=d_{p,q}(\zvf) = (n-1)!
\IS{k=0}{\infty} \dfrac{\partial^{p+k} \bar\partial^{q+k}\zvf(0)}{k!(n-1+p+q+k)!},  
\ee
and the expansion is absolutely and uniformly convergent on $S$.  
\end{Theo}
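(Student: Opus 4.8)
The plan is to begin from the integral representation of the coefficients and convert it into the differential formula \Ref{wspol_rozniczk} by expanding the profile. Writing $d_{p,q}=\int_S f(\zx)\,Z^{(p,q)}_\zh(\zx)\,d\zs(\zx)$, using $Z^{(p,q)}_\zh(\zx)=W^\za_{p,q}((\zh\mid\zx))$ and the measure factorization \Ref{2.6}, and substituting $w=(\zx\mid\zh)$ so that $(\zh\mid\zx)=\bar w$ and $W^\za_{p,q}(\bar w)=W^\za_{q,p}(w)$, I reduce the coefficient to the disc integral $d_{p,q}=\frac{\za+1}{\pi}\int_D\zvf(w)\,W^\za_{q,p}(w)\,(1-|w|^2)^\za\,d\zl(w)$, where $W^\za_{q,p}=\overline{W^\za_{p,q}}$. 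Since $|w|\le 1$ on $\overline D$ and the Taylor series of $\zvf$ is assumed absolutely convergent on $|w|=1$, it converges absolutely and uniformly on $\overline D$; as $W^\za_{q,p}$ is a polynomial and the measure $\frac{\za+1}{\pi}(1-|w|^2)^\za\,d\zl$ is finite, I may then integrate the Taylor series of $\zvf$ term by term.

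The core computation is the single monomial integral $\frac{\za+1}{\pi}\int_D w^j\bar w^i\,W^\za_{q,p}(w)\,(1-|w|^2)^\za\,d\zl$. I would evaluate it by inserting the expansion \Ref{w^pw^q} of $w^j\bar w^i$ into disc polynomials and invoking their mutual $L^2$-orthogonality (which is the orthogonality of the spaces $H^{(p,q)}$ transported to the disc through \Ref{2.6}). Only the term matching $W^\za_{q,p}=\overline{W^\za_{p,q}}$ survives, which forces $j=p+k,\ i=q+k$ for a single index $k\ge 0$ and leaves the factor $\zg^{p+k,q+k}_k\,\|W^\za_{p,q}\|^2$. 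The norm is pinned down by the reproducing normalization: a Cauchy--Schwarz estimate on the reproducing kernel of $H^{(p,q)}$, together with its value $\dim H^{(p,q)}$ at the pole, gives $\|W^\za_{p,q}\|^2=1/\dim H^{(p,q)}$. Feeding in the explicit $\zg^{p+k,q+k}_k$ from \Ref{wspolczynnik-gamma} and the displayed dimension formula, a cancellation of factorials collapses the accumulated constant to exactly $(n-1)!/[k!\,(n-1+p+q+k)!]$, which yields \Ref{wspol_rozniczk}.

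For the absolute and uniform convergence of \Ref{complex_expansion} I would use the Weierstrass $M$-test. Each normalized zonal kernel satisfies $|Z^{(p,q)}_\zh(\zx)|\le 1$ on $S$ (Cauchy--Schwarz applied to the reproducing kernel, whose diagonal value is the constant $\dim H^{(p,q)}$), so it suffices to show $\sum_{p,q}|d_{p,q}|\,\dim H^{(p,q)}<\infty$. Bounding $|d_{p,q}|$ by the same series with absolute values on the Taylor coefficients and reorganizing the double sum by the variables $(j,i)=(p+k,q+k)$, the inner sum over $k$ for fixed $(j,i)$ becomes $\sum_k \zg^{j,i}_k$ up to the factor $1/[j!\,i!\,(n-1)!]$ (using the same $\zg$/dimension identity as above); this is precisely the left-hand side of the decomposition of unity \Ref{jedynka} and therefore equals $1$. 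The bound collapses to $\sum_{j,i}|\partial^j\bar\partial^i\zvf(0)|/(j!\,i!)=M$, the value of the Taylor series on the unit circle, finite by hypothesis.

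The main obstacle is bookkeeping rather than hard analysis: one must track the conjugation and the index shift $(p,q)\mapsto(p+k,q+k)$ consistently, and verify that orthogonality genuinely annihilates every off-diagonal monomial. The one delicate analytic point is the term-by-term integration, which rests on absolute convergence of the Taylor series up to the boundary; and the decisive structural input is the recognition that the decomposition of unity \Ref{jedynka} — the identity flagged as crucial in the text — is exactly the summation identity making the convergence estimate telescope to the finite quantity $M$.
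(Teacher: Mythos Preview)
Your argument is correct and reaches the same conclusion, but the route differs from the paper's. The paper works entirely on the series side: it substitutes the Taylor expansion of $\zvf$ for $f$, expands each monomial $w^j\bar w^k$ via \Ref{w^pw^q} into zonal harmonics, justifies rearranging the resulting double series by the bound $|Z^{(p,q)}_\zh|\le 1$ together with the decomposition of unity \Ref{jedynka}, and then simply reads off $d_{p,q}$ after collecting terms of a given bidegree; the simplification $\frac{1}{(p+l)!(q+l)!}\zg^{p+l,q+l}_l=\dim H^{(p,q)}\cdot\frac{(\za+1)!}{l!(\za+1+p+q+l)!}$ is the same one you perform. Your approach instead starts from the integral \Ref{wsp_d_p,q}, uses \Ref{w^pw^q} plus the $L^2$-orthogonality of the disc polynomials (and the norm identity $\|W^\za_{p,q}\|^2=1/\dim H^{(p,q)}$) to evaluate each monomial integral, and then establishes convergence separately by the $M$-test. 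The paper's version is marginally more economical: it never needs the explicit norm of $W^\za_{p,q}$, and convergence of \Ref{complex_expansion} to $f$ is automatic since the rearranged series \emph{is} the Taylor series of $\zvf((\zx\mid\zh))$. In your version you should add one sentence closing this loop --- the uniformly convergent series \Ref{complex_expansion} sums to $f$ because its coefficients are the $L^2$ projections and spherical harmonics are complete, so the $L^2$ and uniform limits coincide. Both arguments rest on the same two structural inputs, \Ref{w^pw^q} and \Ref{jedynka}, so the difference is one of packaging rather than of idea.
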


\begin{proof}
The assumption on the Taylor series of $\zvf$ implies that the series converges uniformly to $\zvf$ on the closed disc $D$, hence we can write  
$$
 \zvf(w)=\IS{j,k=0}{\infty}\frac{\partial^{j} \bar\partial^{k}\zvf(0)}{j!k!}   w^j\bar w^k .
$$
Using the formula \Ref{w^pw^q} we get 
$$\zvf(w)=\IS{j,k=0}{\infty}\frac{\partial^{j} \bar\partial^{k}\zvf(0)}{j!k!} \IS{l=0}{m}\zg^{j,\,k}_l W^\za_{j-l,\,k-l}(w).$$
Substituting $w= (\zx\mid\zh)$ turns the above into 
\be\label{eqn:sum_to_order}
f(\zx)=\IS{j,k=0}{\infty}\frac{\partial^{j} \bar\partial^{k}\zvf(0)}{j!k!} \IS{l=0}{m}\zg^{j,\,k}_l  Z_{\zh}^{(j-l,\,k-l)}(\zx),\qquad m=\min(j,\,k). 
\ee
To justify the change of the summation order it is enough to show that the inner sum is uniformly bounded (w.r. to $j,\,k$). Since  $$\bigl|Z_{\zh}^{(j-l,\,k-l)}(\zx)\bigr|=\bigl|W^\za_{j-l,\,k-l}((\zx\mid\zh)\bigr|\leq 1,$$
cf. \cite[\textbf{2.4.3}, Property $(iii)$, p. 57]{DXu}, we get
$$\left|\IS{l=0}{m}\zg^{j,\,k}_l W^\za_{j-l,\,k-l}(w)\right|\leq \IS{l=0}{m}\zg^{j,\,k}_l =1, $$
using \Ref{jedynka} at the end. 

Now in the expansion \Ref{eqn:sum_to_order} we first sum  the terms proportional to the zonal harmonic $Z_{\zh}^{(p,\,q)}(\zx)$ of a given bidegree. The resulting coefficient is 
\[
\IS{l=0}{\infty}\left(\frac{\partial^{p+l} \bar\partial^{q+l}\zvf(0)}{(p+l)!(q+l)!} \zg^{p+l,\,q+l}_l  \right)Z_{\zh}^{(p,\,q)}(\zx)
\]  
Using \Ref{wspolczynnik-gamma} we get
\begin{gather*}
\frac{1}{(p+l)!(q+l)!} \zg^{p+l,\,q+l}_l   
= \frac{(\za+1+p+q)(\za+p)!(\za+q)!} {p!q!l!\za!(\za+1+p+q+l)!} \\
=\dim H^{(p,\,q)} \frac{(\za+1)!}{l!(\za+1+p+q+l)!}.
\end{gather*}
This finishes the proof. 
\end{proof}

\begin{remark}
This result was obtained by one of the authors [A.B.] of the present paper in the course of the work on her doctoral dissertation, which was submitted to the Warsaw University of Technology in June 2010. An analogous result is given in the paper \cite{MOP} of Menegatto and al., however the proof given there rests on the use of their decomposition of unity (2.5) on page 154, which is not correct as it stands. 
\end{remark}

\subsection{A variant of the Funck--Hecke formula}

The expansion formulae (\ref{complex_expansion}-\ref{wsp_d_p,q}) immediately imply the following   form of the classical Funk--Hecke theorem for complex spheres, cf. \cite[Theo.~4.4]{EQ}. 

\begin{coro}
If $f$ is a zonal function $f(\zx)=\zvf((\zx\,|\,\zh))$ satisfying the assumptions of the Theorem \ref{exp_f_C} above, then for every complex spherical harmonic $Y^{(p,\,q)}\in H^{(p,\,q)} $, 
\begin{gather}\label{cmplx_FH1}
\int_S Y^{(p,\,q)}(\zx)f(\zx)\;d\sigma (\zx) =
d_{p,\,q}(\zvf )Y^{(p,\,q)}(\zh), 
\intertext{where} 
\label{cmplx_FH2}
d_{p,\,q}(\zvf)   = (n-1)!
\IS{k=0}{\infty}\displaystyle{\frac{\partial^{p+k} \bar\partial^{q+k}\zvf(0)}{k!(n-1+p+q+k)!}}.
\end{gather}
\end{coro}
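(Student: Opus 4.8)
The plan is to derive the identity directly from the spherical harmonic expansion of $f$ provided by Theorem \ref{exp_f_C}, using the reproducing property \Ref{eqn:reproducing_identity} of the kernels $Z_{\zh}^{(p,q)}$ together with the orthogonality of the decomposition \Ref{harm_decomp_by_bideg2}. First I would substitute the expansion \Ref{complex_expansion} of $f$ into the integral and interchange summation with integration, writing
\[
\int_S Y^{(p,q)}(\zx) f(\zx)\, d\zs(\zx) = \IS{a,b=0}{\infty} d_{a,b}\,\dim H^{(a,b)} \int_S Y^{(p,q)}(\zx) Z_{\zh}^{(a,b)}(\zx)\, d\zs(\zx).
\]
The interchange is legitimate because Theorem \ref{exp_f_C} asserts that the series \Ref{complex_expansion} converges absolutely and uniformly on the compact sphere $S$, and $Y^{(p,q)}$ is continuous, hence bounded, there; uniform convergence then permits term-by-term integration.

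The next step is to evaluate each integral separately. For the diagonal index $(a,b)=(p,q)$, the hypothesis $Y^{(p,q)}\in H^{(p,q)}$ lets me apply the reproducing identity \Ref{eqn:reproducing_identity} verbatim, giving
\[
\dim H^{(p,q)}\int_S Y^{(p,q)}(\zx) Z_{\zh}^{(p,q)}(\zx)\, d\zs(\zx) = Y^{(p,q)}(\zh).
\]
For every off-diagonal index $(a,b)\neq (p,q)$ the corresponding integral vanishes: each $Z_{\zh}^{(a,b)}$ belongs to the summand $H^{(a,b)}$, so that integrating the element $Y^{(p,q)}$ of a different summand against it annihilates by the mutual orthogonality of the spaces in \Ref{harm_decomp_by_bideg2}. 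Consequently only the single term $(a,b)=(p,q)$ survives, the sum collapses, and one obtains
\[
\int_S Y^{(p,q)}(\zx) f(\zx)\, d\zs(\zx) = d_{p,q}\, Y^{(p,q)}(\zh),
\]
with $d_{p,q}=d_{p,q}(\zvf)$ the coefficient already computed in \Ref{wspol_rozniczk}, which is precisely the asserted value \Ref{cmplx_FH2}.

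The computation is routine once the Expansion Theorem is granted, so the genuine work sits in two places. The main point requiring care is the justification of the term-by-term integration, for which I would lean entirely on the absolute and uniform convergence of \Ref{complex_expansion} on $S$ established in Theorem \ref{exp_f_C}; without it the rearrangement of the double series against the kernels would not be warranted. A secondary point I would state explicitly is the vanishing of the off-diagonal integrals, which combines the reproducing identity with the orthogonality of the summands $H^{(a,b)}$, keeping the index bookkeeping consistent so that exactly the $(p,q)$ term contributes the value $Y^{(p,q)}(\zh)$.
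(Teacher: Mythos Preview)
Your argument is correct and follows exactly the paper's own proof: substitute the expansion \Ref{complex_expansion} into the integral, interchange summation and integration, and then invoke the reproducing identity \Ref{eqn:reproducing_identity} together with the orthogonality of the $H^{(a,b)}$ to kill all but the $(p,q)$ term. Your version is in fact a bit cleaner, since you use separate summation indices $(a,b)$ and make the justification of the term-by-term integration explicit.
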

\begin{proof}
In fact, substituting the expansion formula \Ref{complex_expansion} into the integral on the left-hand-side of equation \Ref{cmplx_FH1} and changing the order of integration and summation we get
\[
\int_S Y^{(p,\,q)}(\zx)f(\zx)\;d\sigma (\zx)  = \IS{p,q=0}{\infty}d_{p,\,q}\;\dim H^{(p,\,q)}\;\int_S Y^{(p,\,q)}(\zx) Z_{\zh}^{(p,\,q)}(\zx)\;d\sigma (\zx) 
\]
By virtue of \Ref{eqn:reproducing_identity} and the orthogonality of spherical harmonic of different bihomogeneity we get the result.
\end{proof}

Comparing with the formula (4.3) of Quinto, \cite[p.~258]{EQ}, see also \Ref{2.6} above, we see that 
\[
\int_D W^{(p,\,q)}(w)\zvf(w)(1-|w|^2)^{n-2}\; d\zl(w)={\pi} (n-2)!
\IS{k=0}{\infty}\dfrac{\partial^{p+k} \bar\partial^{q+k}\zvf(0)}{k!(n-1+p+q+k)!}.
\]
This is reminiscent of the classical Pizzetti's formula, cf. \cite{pa1}, however this connection will be considered elsewhere.
 
\subsection{Applications}

In the following we give two applications of the main theorem.  

\subsubsection{Plane wave expansion}
\begin{coro}
Given $\zh\in S$ let $x\mapsto e^{i\Re (x \mid \zh)}$ be the plane wave in $\NC^n$ with normal $\zh$.    Then writing  $x=|x|\zx$ with $\zx\in S$, one has the complex spherical expansion 
\be\label{wykl_Re}
e^{i\Re (x \mid \zh)}=(n-1)!\Bigl(\frac{|x|}{2}\Bigl)^{-n+1}\IS{p,q=0}{\infty}\,i^{p+q}\dim \CH^{(p,q)} J_{p+q+n-1}(|x|)\; Z_{\zh}^{(p,q)}(\zx),
\ee
where the Bessel functions $J_\zn(r)$ of the first kind and order $\zn$ are given by 
\be\label{Bessel}
J_\zn(r)= \Bigl(\frac{r}{2}\Bigr)^\zn\IS{k=0}{\infty}
\frac{(-1)^k}{\zG(k+1)\zG(k+\zn+1)}\Bigl(\frac{r}{2}\Bigr)^{2k}.
\ee
The expansion \Ref{wykl_Re} is absolutely and uniformly convergent on every ball in $\NC^n$.  
\end{coro}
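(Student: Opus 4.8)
The plan is to observe that, for each fixed radius $r=|x|$, the map $\zx\mapsto e^{i\Re(x\mid\zh)}$ is a zonal function on $S$, and then to read its coefficients directly off the Expansion Theorem~\ref{exp_f_C}. Writing $x=r\zx$ we have $\Re(x\mid\zh)=r\,\Re\bigl((\zx\mid\zh)\bigr)$, and for $U\in\gr{SU}{n}_\zh$ one has $(U\zx\mid\zh)=(\zx\mid U^{-1}\zh)=(\zx\mid\zh)$; hence the function depends on $\zx$ only through $w=(\zx\mid\zh)$ and is zonal with profile $\zvf(w)=e^{ir\Re w}=e^{ir(w+\bar w)/2}$. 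As an entire function of $w$ and $\bar w$ this profile trivially satisfies the hypotheses of Theorem~\ref{exp_f_C} (real analyticity and absolute convergence of the Taylor series on the unit circle), so its coefficients are given by \Ref{wspol_rozniczk}.

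The decisive simplification is that the profile factorizes, $\zvf(w)=e^{irw/2}\,e^{ir\bar w/2}$, so its mixed Taylor derivatives at the origin separate into $\partial^{j}\bar\partial^{l}\zvf(0)=(ir/2)^{j+l}$. Taking $\partial^{p+k}\bar\partial^{q+k}\zvf(0)=(ir/2)^{p+q+2k}$, substituting into \Ref{wspol_rozniczk}, and pulling out $(ir/2)^{p+q}=i^{p+q}(r/2)^{p+q}$ while using $(ir/2)^{2k}=(-1)^{k}(r/2)^{2k}$, I obtain
\[
d_{p,q}=(n-1)!\,i^{p+q}\Bigl(\tfrac{r}{2}\Bigr)^{p+q}\IS{k=0}{\infty}\frac{(-1)^{k}(r/2)^{2k}}{k!\,(n-1+p+q+k)!}.
\]
Next I would recognize the inner series as a Bessel function: setting $\zn=p+q+n-1$, so that $\zG(k+1)=k!$ and $\zG(k+\zn+1)=(n-1+p+q+k)!$, comparison with \Ref{Bessel} identifies the sum as $(r/2)^{-\zn}J_{p+q+n-1}(r)$. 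This collapses the coefficient to $d_{p,q}=(n-1)!\,i^{p+q}(r/2)^{-(n-1)}J_{p+q+n-1}(r)$, and inserting it into \Ref{complex_expansion} (recalling $\dim H^{(p,q)}=\dim\CH^{(p,q)}$) reproduces \Ref{wykl_Re} verbatim.

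The one genuinely analytic point, which I expect to be the main obstacle, is the asserted absolute and uniform convergence on every ball $\{|x|\le R\}$: the Expansion Theorem concerns a single fixed profile, whereas here the profile varies with $r$ and uniformity must be shown jointly in $x$. I would settle this by a direct majorant. Using $|Z_\zh^{(p,q)}(\zx)|\le1$ together with the elementary estimate $|J_\zn(r)|\le (r/2)^{\zn}e^{(r/2)^{2}}/\zn!$ (which follows from \Ref{Bessel} since $(k+\zn)!\ge\zn!\,k!$ termwise), the general term of \Ref{wykl_Re} is dominated, for $|x|=r\le R$, by
\[
(n-1)!\,e^{(R/2)^{2}}\,\dim\CH^{(p,q)}\,\frac{(R/2)^{p+q}}{(p+q+n-1)!}.
\]
Summing first over $p+q=l$ turns $\dim\CH^{(p,q)}$ into $\dim H^{l}$, which is polynomial in $l$, so the majorant series $\sum_{l}\dim H^{l}(R/2)^{l}/(l+n-1)!$ converges by the factorial in the denominator and bounds the expansion uniformly for $r\le R$. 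This same cancellation shows that the apparent singularity of the prefactor $(|x|/2)^{-n+1}$ at $x=0$ is removable, since it is absorbed by the factor $(r/2)^{p+q+n-1}$ furnished by each Bessel function.
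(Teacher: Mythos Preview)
Your argument is correct and follows essentially the same route as the paper's proof: compute the mixed derivatives of the profile $\zvf(w)=e^{ir\Re w}$ at the origin, insert them into \Ref{wspol_rozniczk}, and recognize the resulting series as a Bessel function of order $p+q+n-1$. Your treatment is in fact more complete, since the paper merely asserts the absolute and uniform convergence on balls, whereas you supply an explicit majorant via the bound $|J_\zn(r)|\le (r/2)^{\zn}e^{(r/2)^{2}}/\zn!$ together with $|Z_\zh^{(p,q)}|\le 1$.
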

\begin{remark}
The Bessel functions, which are coefficients in this expansion, depend only on the total degree of homogeneity $p+q$. Therefore the expansion \Ref{wykl_Re} can be reduced to the usual plane wave expansion in $\NR^{2n}$, cf. eg. \cite{far,BDS1}
\[
e^{i\Re (x \mid \zh)}=(n-1)!\Bigl(\frac{|x|}{2}\Bigl)^{-n+1}\IS{l=0}{\infty}\,i^{l}\dim \CH^{l}J_{l+n-1}(|x|)\; Z_{\zh}^{l}(\zx)
\] 
by using the summation formula for the zonal kernels
\[
\dim \CH^l \; Z^l_{\zh}(\xi)=\IS{p+q=l}{}\dim\CH^{(p,q)}\; Z^{(p,q)}_{\zh }(\xi).
\]
 
\end{remark}
\begin{proof}
By differentiation of the profile function $\zvf (w)=e^{ir \Re w}$, where $r=|x|$, we obtain for the coefficients of the plane wave expansion the following expressions 
$$\begin{array}{llll}
d_{p,q}(\zvf)&=&\displaystyle{(n-1)!\IS{k=0}{\infty}\frac{(-1)^k \; i^{p+q}\; r^{p+q+2k}}{2^{p+q+2k}k! (n-1+p+q+k)!}}\\
&=&\displaystyle{(n-1)!\; i^{p+q}\Bigl(\frac{r}{2}\Bigl)^{-n+1}\Bigl(\frac{r}{2}\Bigl)^{p+q+n-1}\;\IS{k=0}{\infty}\frac{(-1)^k}{k! (n-1+p+q+k)!}\Bigl(\frac{r}{2}\Bigl)^{2k}}.\\
\end{array}$$
By comparison with the formula \Ref{Bessel} the result follows. 
\end{proof}

\subsubsection{Expansion of Poisson--Szeg\"o kernel}

Let us remind that the Poisson--Szeg\"o kernel is defined by the formula 
\be\label{j_P-Sz}
P_n(z,\,\zh)=\frac{(1-|z|^2)^n}{|1-(z|\zh)|^{2n}}, \qquad (z,\,\zh)\in B\times S,
\ee 
where $B\subset \NC^n$ is the unit ball and the unit sphere $S$ is its boundary. It plays the same role with respect to the Laplace---Beltrami operator associated to the Bergman metric on $B$ (cf. eg. \cite{Rud}) as the usual Poisson kernel in relation to the Euclidean Laplacian in the real case, i.e. the formula 
$$
u(z)= \int_S P_n(z,\zh)f(\zh)\,d\sigma,\qquad z\in B,  
$$
expresses functions in the unit ball annihilated by the Laplace--Beltrami operator in terms of their boundary values on the sphere $S$.  

In the paper \cite{Fo2} Folland derived the spherical harmonic expansion of the Poisson--Szeg\"o kernel and gave the explicit expressions for the coefficients in terms of hyper\-geometric functions. We show below that Folland's expansion follows from our expansion Theorem \ref{exp_f_C}, which applies here in view of the fact that the function 
$\zx \mapsto P_n(r\zx,\zh) =\dfrac{(1-r^2)^n}{|1-r(\zx|\zh)|^{2n}}$
is a zonal function on $S$ with the pole at $\zh$.

\begin{Theo}\label{wn_j-P-C}
For $r\in[0,1)$ and $\zh\in S$ the spherical harmonic expansion of the Poisson--Szeg\"o kernel   $\zx\mapsto P_n(r\zx,\zh)$ has the form

\be\label{j_Poissona-Sz}
P_n(r\zx,\zh)=\frac{(1-r^2)^n}{|1-r(\zx|\zh)|^{2n}} =\IS{p,q=0}{\infty}\dim \CH^{(p,q)} S_n^{p,q}(r) Z_{\zh}^{(p,q)}(\zx),
\ee
where 
\be\label{wspol_S}
S_n^{p,q}(r)=r^{p+q}\;\frac{(p+n-1)!(q+n-1)!}{(n-1)!(p-1)!(q-1)!}\IS{k=0}{\infty}\frac{(p+k-1)!(q+k-1)!}{(n-1+p+q+k)!}\frac{r^{2k}}{k!}
\ee
or equivalently 
\be
S_n^{p,q}(r) = r^{p+q} {}_2F_1\left(\begin{matrix}p,\;q\\ p+q+n\end{matrix};r^2\right){\bigg /} {}_2F_1\left(\begin{matrix}p,\;q\\ p+q+n\end{matrix};1\right) .
\ee
\end{Theo}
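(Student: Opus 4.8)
The plan is to apply the Expansion Theorem \ref{exp_f_C} directly to the profile of the zonal function $\zx\mapsto P_n(r\zx,\zh)$ and then to match the resulting coefficient with $S_n^{p,q}(r)$. Comparing the asserted expansion \Ref{j_Poissona-Sz} with the general form \Ref{complex_expansion}, and using $\dim H^{(p,q)}=\dim\CH^{(p,q)}$, it suffices to prove that $d_{p,q}=S_n^{p,q}(r)$. First I would record the profile function
\[
\zvf(w)=\frac{(1-r^2)^n}{|1-rw|^{2n}}=(1-r^2)^n(1-rw)^{-n}(1-r\bar w)^{-n},\qquad w\in D.
\]
Since $r\in[0,1)$, one has $|rw|\le r<1$ for $|w|\le 1$, so $\zvf$ extends to a real-analytic function on a neighbourhood of the closed disc; in particular its Taylor series about $0$ converges absolutely on the unit circle, and the hypotheses of Theorem \ref{exp_f_C} are met. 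Expanding each factor by the binomial series $(1-t)^{-n}=\sum_{j\ge 0}\frac{(n)_j}{j!}t^j$ gives
\[
\zvf(w)=(1-r^2)^n\IS{j,k=0}{\infty}\frac{(n)_j(n)_k}{j!\,k!}\,r^{j+k}\,w^j\bar w^k,
\]
from which I read off $\partial^{j}\bar\partial^{k}\zvf(0)=(1-r^2)^n\,(n)_j\,(n)_k\,r^{j+k}$.

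Next I would substitute these Taylor coefficients into the differential formula \Ref{wspol_rozniczk}. Writing $(n)_{p+k}=(n+p+k-1)!/(n-1)!$, the coefficient becomes
\[
d_{p,q}=\frac{(1-r^2)^n}{(n-1)!}\,r^{p+q}\IS{k=0}{\infty}\frac{(n+p+k-1)!\,(n+q+k-1)!}{(n-1+p+q+k)!}\frac{r^{2k}}{k!},
\]
which I recognize, after factoring out the $k=0$ term, as
\[
d_{p,q}=(1-r^2)^n\,r^{p+q}\,\frac{(n+p-1)!\,(n+q-1)!}{(n-1)!\,(n-1+p+q)!}\;{}_2F_1\!\left(\begin{matrix}n+p,\;n+q\\ n+p+q\end{matrix};r^2\right).
\]

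The crux is then to remove the factor $(1-r^2)^n$, since the asserted $S_n^{p,q}(r)$ carries no such factor. This is precisely the content of the classical Euler transformation
\[
{}_2F_1\!\left(\begin{matrix}a,\;b\\ c\end{matrix};x\right)=(1-x)^{c-a-b}\,{}_2F_1\!\left(\begin{matrix}c-a,\;c-b\\ c\end{matrix};x\right),
\]
applied with $a=p$, $b=q$, $c=n+p+q$ (so $c-a-b=n$), which yields $(1-r^2)^n\,{}_2F_1(n+p,n+q;n+p+q;r^2)={}_2F_1(p,q;n+p+q;r^2)$. Substituting this and re-expanding the hypergeometric series as a sum of factorials via $(p)_k=(p+k-1)!/(p-1)!$ reproduces \Ref{wspol_S} verbatim. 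Finally, for the equivalent normalized form I would evaluate the denominator by the Gauss summation ${}_2F_1(p,q;n+p+q;1)=\Gamma(n+p+q)\Gamma(n)/\bigl(\Gamma(n+p)\Gamma(n+q)\bigr)$, valid since $c-a-b=n>0$, which identifies the constant $\frac{(n+p-1)!(n+q-1)!}{(n-1)!(n-1+p+q)!}$ with the reciprocal of ${}_2F_1(p,q;n+p+q;1)$. I expect the only genuine subtlety to be the appearance and disappearance of the $(1-r^2)^n$ factor; everything else is bookkeeping with Pochhammer symbols, while the absolute and uniform convergence of the expansion is inherited directly from Theorem \ref{exp_f_C}.
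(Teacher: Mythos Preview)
Your proof is correct and in fact streamlines the paper's argument. Both you and the paper apply Theorem \ref{exp_f_C} to the profile of $P_n(r\zx,\zh)$ and arrive at the same intermediate coefficient
\[
d_{p,q}=(1-r^2)^n\,r^{p+q}\,\frac{(n+p-1)!\,(n+q-1)!}{(n-1)!\,(n+p+q-1)!}\;{}_2F_1\!\left(\begin{matrix}n+p,\;n+q\\ n+p+q\end{matrix};r^2\right).
\]
From here the paths diverge: the paper expands $(1-r^2)^n$ by the binomial theorem, gathers the resulting double sum by powers of $r^2$, and then evaluates the inner finite sum via the Pfaff--Saalsch\"utz ${}_3F_2$ identity. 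You instead recognize that the factor $(1-r^2)^n$ is absorbed in one stroke by the Euler transformation of ${}_2F_1$, and then read off the series \Ref{wspol_S} and the normalized form via Gauss summation. Your route is shorter and uses only the standard ${}_2F_1$ toolbox; the paper's route has the (minor) advantage of making the coefficient of each power $r^{2k}$ explicit before invoking a summation identity, and of being self-contained once Pfaff--Saalsch\"utz is granted. The two identities are of course closely related, so there is no real difference in depth, only in packaging.
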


\begin{remark}
Folland in \cite{Fo2} approaches the problem of this expansion along different lines. He first establishes a special case of the solution of the Dirichlet problem corresponding to a bihomogeneous spherical harmonic as the boundary value --- at this stage already the series $S_n^{p,q}(r)$ for the coefficients enter, and from that derives the expansion of the kernel  by examining the convergence of the right hand side of \Ref{j_Poissona-Sz}. 
\end{remark}

\comment{
Our approach is straightforward and based on the use of the expansion Theorem \ref{exp_f_C} applied to the denominator in the formula the binomial formula  for the nominator in \Ref{j_P-Sz} together with  other factor  of the formula. The summation of the resulting series uses the Pfaff--Saalsch\"utz identity, cf. \cite[Theorem 2.2.6]{AA}. 
}

\begin{proof}  
From the expansion given as formula (2.9) on p.~6 in  \cite{Koo},  cf. also \cite{SD}
$$
\dfrac{1}{|1-w|^{2n}}
= \dfrac{1}{(1-w)^n(1- \overline{w})^n }= \IS{p,q=0}{\infty}\binom{p+n-1}{n-1}\cdot \binom{q+n-1}{n-1} w^p\overline{w}^q,
$$ 
setting $\zvf_r(w)=  |1-rw|^{-2n}$ we immediately see that   
\[
 \partial^{j}\bar\partial^{k}\zvf_r(0) = 
r^{j+k}(n)_j(n)_k . 
\]
Now applying formula \Ref{wspol_rozniczk} of the Theorem \ref{exp_f_C} to the function $\zvf_r((\zx|\zh))$ (with $\zh\in S$ kept fixed) we see that the following holds 
\begin{lemma}\label{rozw_f_gen}
For any $\zx, \zh\in S$  
\begin{gather}
\dfrac{1}{|1-r(\zx|\zh)|^{2n}}= \nonumber \\ 
\IS{p,q=0}{\infty}\dim \CH^{(p,q)}Z_{\zh}^{(p,q)}(\zx)r^{p+q}
\IS{k=0}{\infty}\frac{(n-1+p+k)!(n-1+q+k)!}{(n-1)!(n-1+p+q+k)!}\frac{r^{2k}}{k!}.
\end{gather}
\end{lemma}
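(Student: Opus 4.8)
The plan is to obtain the Lemma as a direct application of the Expansion Theorem \ref{exp_f_C} to the profile $\zvf_r(w)=|1-rw|^{-2n}$, whose Taylor coefficients $\partial^j\bar\partial^k\zvf_r(0)=r^{j+k}(n)_j(n)_k$ have already been read off above from Koornwinder's generating-function expansion. Concretely, three things remain to be done: verify that $\zvf_r$ satisfies the analyticity and convergence hypotheses of Theorem \ref{exp_f_C}, substitute the coefficients into the differential formula \Ref{wspol_rozniczk} for $d_{p,q}$, and convert the resulting Pochhammer symbols into the factorials appearing in the statement. Inserting the coefficients so obtained into the general zonal expansion \Ref{complex_expansion} then yields the claimed formula.

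First I would check the hypotheses, where the restriction $r\in[0,1)$ is decisive. Since $|1-rw|\ge 1-r|w|\ge 1-r>0$ for every $w$ in the closed disc $D$, the function $\zvf_r$ is the reciprocal of a nonvanishing real-analytic function, hence real-analytic on a neighborhood of $D$. For the required absolute convergence on the unit circle I would note that on $|w|=1$
\[
\IS{j,k=0}{\infty}\left|\frac{\partial^{j}\bar\partial^{k}\zvf_r(0)}{j!\,k!}\,w^{j}\bar w^{k}\right|
=\IS{j,k=0}{\infty}\binom{j+n-1}{n-1}\binom{k+n-1}{n-1}r^{j+k}
=(1-r)^{-2n},
\]
the double series factoring into the square of $\IS{j=0}{\infty}\binom{j+n-1}{n-1}r^{j}=(1-r)^{-n}$, which is finite exactly because $r<1$.

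With the hypotheses secured, substituting $\partial^{p+k}\bar\partial^{q+k}\zvf_r(0)=r^{p+q+2k}(n)_{p+k}(n)_{q+k}$ into \Ref{wspol_rozniczk} gives
\[
d_{p,q}(\zvf_r)=(n-1)!\IS{k=0}{\infty}\frac{r^{p+q+2k}\,(n)_{p+k}\,(n)_{q+k}}{k!\,(n-1+p+q+k)!}.
\]
I would then rewrite $(n)_{p+k}=(n-1+p+k)!/(n-1)!$ and $(n)_{q+k}=(n-1+q+k)!/(n-1)!$; the prefactor $(n-1)!$ cancels one of the two reciprocal factorials, and pulling out $r^{p+q}$ collapses the expression to precisely the inner sum in the Lemma. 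This last step is a routine chase of factorials; the only point that genuinely requires attention is the convergence estimate above, which both licenses the use of Theorem \ref{exp_f_C} and accounts for the hypothesis $r\in[0,1)$.
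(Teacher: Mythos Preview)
Your proof is correct and follows exactly the same route as the paper: compute the Taylor coefficients of $\zvf_r(w)=|1-rw|^{-2n}$ from Koornwinder's generating-function expansion and feed them into formula \Ref{wspol_rozniczk} of Theorem \ref{exp_f_C}. The only difference is that you spell out the verification of the convergence hypothesis on the unit circle (via the factored sum $(1-r)^{-2n}$), which the paper leaves implicit.
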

To finish the proof of Theorem \ref{wn_j-P-C} we need to expand(\footnote{There is no attempt to make this expansion in the paper of Menegatto et al.\cite{MOP}}) 
$$
(1-r^2)^n\, r^{p+q} 
\IS{k=0}{\infty}\frac{(n-1+p+k)!(n-1+q+k)!}{(n-1)!(n-1+p+q+k)!}\frac{r^{2k}}{k!}   
$$
in terms of powers of $r^2$ and show that the expansion coincides with the formula \Ref{wspol_S}.  

Using the binomial formula we get  
\begin{align}\label{eqn:binom}
&\hphantom{={}\;}(1-r^2)^n\,r^{p+q}
\IS{k=0}{\infty}\frac{(n-1+p+k)!(n-1+q+k)!}{(n-1)! (n-1+p+q+k)!}\frac{r^{2k}}{k!}\nonumber  \\
&= \dfrac{r^{p+q}}{(n-1)!}\IS{j=0}{n}(-1)^j\binom{n}{j}r^{2j}\IS{k=0}{\infty}\frac{(n-1+p+k)!(n-1+q+k)!}{ (n-1+p+q+k)!}\frac{r^{2k}}{k!} \nonumber \\
&= \dfrac{r^{p+q}}{(n-1)!}  \IS{k=0}{\infty}r^{2k}\IS{j=0}{n}(-1)^j\binom{n}{j}\frac{(n-1+p+k-j)!(n-1+q+k-j)!}{ (n-1+p+q+k-j)!(k-j)!}
\end{align}

The inner sum in the last equality, after rewriting it in the form of a hypergeometric sum, can be computed by using the Pfaff--Saalsch\"utz identity, cf. \cite[Theorem 2.2.6]{AA},
$$ 
{}_3F_2\left(\begin{matrix}  
-n,\;a, \; b \\
\noalign{\smallskip}
c,\; 1+a+b-n-c & \end{matrix};\  1\right)=\IS{j=0}{n}\frac{(-n)_j (a)_j(b)_j}{(c)_j (1+a+b-c-n)_j}\frac{1}{j!} =
\frac{(c-a)_n(c-b)_n}{(c)_n(c-a-b)_n}.  
$$  

In fact, recalling the identity satisfied by the Pochhammer symbol  
$(-1)^j\dfrac{t!}{(t-j)!}=(-t)_j$,\ for\ $t>j$, 
we see that the inner sum in the expression \Ref{eqn:binom} can be rewritten as a hypergeometric type sum 
\be\label{eqn:sum_hyper}
\frac{(n-1+p+k)!(n-1+q+k)!}{k!(n-1+p+q+k)!}\IS{j=0}{n}\frac{(-n)_j(-k)_j(1-p-q-k-n)_j}{(1-p-k-n)_j(1-q-k-n)_j}\frac{1}{j!}.
\ee 
Thus setting $a=-k$, $b=1-p-q-k-n$, $c=1-p-k-n$ in the Pfaff--Saalsch\"utz identity and using  $(t)_n=(-1)^n(1-t-n)_n$, the sum \Ref{eqn:sum_hyper} is seen to to be equal  to 
\[
\frac{(p+n-1)!(q+n-1)!(p+k-1)!(q+k-1)!}{k!(p-1)!(q-1)!(p+q+n+k-1)!}. 
\]
Substituting this expression into \Ref{eqn:binom} and comparing with \Ref{wspol_S} concludes the proof of the Theorem \ref{wn_j-P-C}.
\end{proof}

\begin{remark}
The expansion of the Poisson--Szeg\"o kernel given in the Corollary 3.3 of \cite{MOP} differs  at two important points from the expansion \Ref{wspol_S}, which coincides with the one given originally by Folland in \cite{Fo2}.  Firstly, on the right hand side of the given formula the factor $(1-r^2)^q$ is not expanded, but more importantly, the inner sum there is just a numerical factor, because the factor  $r^{2j}$ is missing in all its terms. The consequence of this omission is that the formula (3.11) in the Theorem 3.4 of the paper \cite{MOP} expresses $S_n^{p,q}(r)$ as a polynomial, what is definitely not true.  

\end{remark}

\begin{footnotesize}

\end{footnotesize}

\end{document}